\newtheorem{lemma}{Lemma}
\newtheorem{remark}{Remark}
\newtheorem{theorem}{Theorem}
\newtheorem{definition}{Definition}
\newtheorem{corollary}{Corollary}
\def\R{{\Bbb R}}
\def\Z{{\mathbb{ Z}}}
\def\RP{{\Bbb R}\!{\rm P}}
\def\C{{\mathbb{C}}}
\def\H{{\mathbb{H}}}
\def\Q{{\bf Q}}
\def\k{{\bf k}}
\def\i{{\bf i}}
\def\j{{\bf j}}
\def\Zt{{\Z^{tw}}}
\def\mod{{\operatorname{mod}\,}}
\def\R{{\Bbb R}}
\def\e{{\bf e}}
\def\Imm{{\rm Imm}}
\def\e{{\varepsilon}}
\begin{document}
\title{Local coefficients and the Herbert Formula}
\author{Petr M. Akhmet'ev\footnote{National Research University Higher School of
Economics, Moscow, Russia; 108840, IZMIRAN, Troitsk, Moscow region, Russia}
 and Theodore Yu. Popelenskii
\footnote{Moscow State University, Faculty of Mechanics and Mathematics, Leninskie Gory 1, Moscow, 119991 Russia}}
%���������� ��������������� %����������� ��. �.�.����������, �������
%���������������� ��������� � ����������}}
\date{}
\maketitle
\medskip
%\section{���������� ���������� � ��������� � %���������� ��������������}

\section{Introduction}

{\it The Herbert Theorem} is used in papers on immersions theory.  
Let us recall ths theorem. Consider smooth closed manifold $N$ of dimension  $n-k$ and consider its immersion
$g:N^{n-k}\looparrowright  \R^n$, which is self-transversal. Then for a manifold
$$
\bar L = \{(x,y)\in N\times N\;|\; x\ne y, g(x)=g(y)\},
$$
which is called a self-intersection manifold of the immersion $g$,
the composition
$j: \bar L \subset N\times N \stackrel{pr_1}{\to} N$ is an immersion.
Consider the normal bundle $\nu(g)$ of the immersion  $g$.
Let us assume that $N$ is oriented. The bundle
$\nu(g)$  and the manifold $\bar L$ is naturally   oriented. 
The homology class, dual in $N$ to the Euler class of the normal bundle
$\nu(g)$, is called  {\it the homology Euler class} and is denoted by  $e_*(\nu(g))$.
Then in the group  $H_{n-2k}(N,\Z)$  the following  formula is satisfied ({\it the Herbert formula}):
\begin{equation}\label{eq:herbert}
j_*[\bar L] + e_*(\nu(g))=0. 
\end{equation}

In the case the manifold $N$ is non-oriented, the  formula ~(\ref{eq:herbert}) is 
satisfied, if  homology and cohomology groups is defined over $\Z/2$-coefficients.

Let us note that the original Herbert's paper~\cite{herbert} contains formulas of  $k$-uple self-intersection points of immersions. Here are several generalizations of the Herbert formula, 
and several approaches to prove it, see 
~\cite{eccles-grant, lippner-szucs,  Grant}. Also self-intersections of immersions 
are investigated using homology of classifying loop-spaces, see \cite{Eccles}.

In the present paper an  interest to immersions is related with the classical 
paper ~\cite{Hirsch-paper}, and with  relationships  to homotopy groups of spheres and of 
Thom spaces, see~\cite{pontr, Wells}. Let us mention papers~\cite{A-E,akhmetiev-frolkina, A-F}
by one of the two authors.

In this paper we discuss  a generalisation of the Herbert formula for double points, when the
normal bundle of the immersion $g$ admits an additional structure, and an application.

Below the Whitney sum of $k$ copies of a prescribed vector-bundle $\xi$ is denoted by  $k\xi$.
The trivial line (1-dimensional real) bundle over $N$ is denoted by  $\e_N$.

%������������� ������ �������� ��������� � ������ ������� 
%� ������������ ���������� �������������� ����� $\R P^\infty$   � ������ �������, 
%������������ �.�. ����������.

\section{Immersions with additional structure of normal bundles}

Let us define a number of regular immersions cobordism groups.
Assume a group $G$ and its representation in the space $\R^d$ are fixed.
The following vector bundle  
$\gamma=EG\times_G \R^d \to BG$ is well-defined.  
\begin{definition}\label{def:G-str}
 Let us say an immersion  
 $g:N^{n-kd}\looparrowright \R^n$ is equipped by a  {\em $G$-structure} (of its normal bundle), 
if the following data are fixed:

(a) a continous mapping $\eta: N\to BG$;

(b) a normal bundles isomorphism  $ \Xi:\nu(g)\to \oplus k\, \eta^*(\gamma)$. 
\end{definition}

Strictly say, to define
 <<$G$-structure>>, a representation of $G$ has to be prescribed. Or, as in~\cite{akhmetiev-frolkina},
a universal bundle  $\gamma$ should be fixed. In our case for a group $G$ the only
representation is considered, this give no collisions.

Let us consider arbitrary triples
 $(g:N^{n-kd}\looparrowright\R^n,\eta, \Xi)$, where $N$ is closed.

%\fbox{�� ���� �� ����� ������, ��� � �������� � �������������, ������ ���������� ��� ���� ���������� %����������� ���������� � ���������� ����������� � ����������� ������������?}

\begin{definition} \label{def:bord} 
Let us say that triples
 $(g_0\colon N_0^{n-kd}\to \R^n,\eta_0, \Xi_0)$ and
$(g_1\colon N_1^{n-kd}\to \R^n,\eta_1, \Xi_1)$ are  {\em cobordant}, if the following 3 conditions are satisfied: 

(i) there exists a compact manifold  $W^{n-kd+1}$ and a diffeomorphism  
$\varphi_0\sqcup\varphi_1:N_0\sqcup N_1 \to \partial W$;

%\[  \]
%����� ����� ������. ��-������ ���������� ����� �� ����. ��-������ �� ����� ����������� �� �����, �.�. %����������������� �� $\Xi$ (��. ���� � (iii)).
%\[  \]

(ii) there exists an immersion  $G:W^{n-kd+1}\looparrowright \R^{n}\times[0,1]$, 
which is orthohonal along the boundary
 $\partial W$ to 
$\R^{n}\times\{0,1\}$ and which is agree with restrictions of the immersions $g_0$ � $g_1$ to  $\partial W$,  namely, for $s=0,1$ the restriction  $G|_{\varphi_s(N_s)} $ coinsids with the composition
 $N_s\stackrel{\varphi_s}{\rightarrow}\R^n\to \R^n\times\{s\}$;

(iii) there exists a continous mapping  $\eta:W^{n-kd+1}\to BG$ and an isomorphism 
$\Xi:\nu(g)\to \oplus k\, \eta^* (\gamma) $, that are agree with 
 $\eta_s$ and $\Xi_s$ ($s=0,1$) on the boundary of   $W$ 
in the following natural sence:
 $\eta|_{\varphi_s(N_s)} = \eta_s$ and 
$\Xi|_{N_s}=\Xi_s$. 
%$\varphi^*_s\circ \Xi=\Xi_s$.  ��� ������� ������? �� ���� �� ���--�� ���������� �� ����?
\end{definition}

By standard arguments one could proves that the cobordism relation, introduced above, is an equivalence
relation. A disjoin union induces an Abelean group structure on the set of cobordism classes, 
this group is denoted by
  $\Imm^G(n-kd,kd)$. More detailed definition is in~\cite{akhmetiev-frolkina}.

In the case when  
$G$-bundle  $\gamma$ is oriented, the manifold   $N$ is equipped with  a natural 
orientation, constructed by $\Xi$. In a general case the bundle $\gamma$
is non-oriented, and 
$N$ is equipped by a natural orientation only in the case $k \equiv 0 \pmod{2}$. 
Nevertheless, for a non-oriented  $\gamma$ and for  $k \equiv 1 \pmod{2}$ on  $N$ a local orientation is well-defined. Let us consider the details for a special case
 $G \cong \Z_4 \rtimes \Z$.

Let $G$ be a group, equipped with an an action of $\Z$ by a (non-trivial) authomorphism
$\theta=\theta(1): G \mapsto G$, $1 \in \Z$.
The classifying space   $BG\rtimes \Z$ is defined as a semi-direct product of $BG$  and the standard circle $S^1$, this space is the factor of $BG\times [0,1]$ by identification $BG\times\{0\}$ and $BG\times\{1\}$ using the homeomorphism  $B\theta$. The homomorphism $G\rtimes \Z\to \Z$ classifies the projection $pr_S:BG\rtimes \Z\to S^1$, 
this projection is a locally trivial fibration with the fibre  $BG$. 

Let us fix an embedding  $T=\dot{D}^{n-1}\times S^1\subset \R^n$
of an open solid torus onto the standard solid torus, which is invariant by rotation trough the
coordinate axis.
The projection 
 $pr_T$  of the solid torus  $T$  onto the axial circle $S^1$ is well-defined. Denote by$pt \in S^1$ a marked point on the circle. 
Thus,  $pr_S^{-1}(pt) \subset BG \rtimes \Z$ coincides with 
$BG \subset BG \rtimes \Z$. 

Let us prescribe a representation $G\rtimes \Z$ � $\R^d$ and the corresponding  vector $d$-bundle $\gamma$ over $BG\rtimes \Z$ (the universal bundle).

\begin{definition} 
Let us say that for an immersion
 $   g:N^{n-kd}\looparrowright \R^n$  {\em a  $G\rtimes\Z$-structure} (of the normal bundle of $g$) is fixed, is the following conditions are satisfied:

(a) the immersion $g$ is an immersion into the solid torus $T \subset \R^n$, i.e. is represented by a composition 
$N^{n-kd}\stackrel{\tilde g}{\looparrowright} T {\subset} \R^n$;

Denote by $K^{n-kd-1} \subset N^{n-kd}$  a codimension $1$ submanifold, which is given by the formula: 
$(pr_T \circ g)^{-1}(pt)$, assuming, that $pt$ is a regular value for  $pr_T \circ g$.
Denote by $U_K \subset N^{n-kd}$ a thin regular neighbourhood of the submanifold  $K^{n-kd-1} \subset N^{n-kd}$.

(b) a continuous mapping  $\eta: N\to BG\rtimes \Z$ , which induces the corresponding mapping 
of pairs
$\eta: (N; U_N, N\setminus U_N) \to (BG \rtimes \Z; BG, BG \rtimes \Z \setminus BG)$; moreover, the composition  $pr_S\circ \eta$ is homotopic to  a composition $pr_T\circ\tilde g$, and the homotopy class of this homotopy is given; 
%����� ���� �� �������� ����� �� ���
%\[  \]
%����� ����� "`���������"' ����� (� �����!) �������� �� ����� "`���������"'. � ����������������, �������� � ���, ���
%���������� ������������ ����� ���� ��� ����������� ������ $S^1$. ��� �� ������ �������� � ����� �������������, �������
%����� ������� �� ���������� ��������������. ����� �������� ������ ���������� (��� ���� ����� ��������������)
%����� � ����������, ��� �������� �� �������. �����, ���� ����������� ������������ �������� ��� �������������� ������ � %����������, ��� ������������������ ����������� ������ (�� ������� �� ��������) �������� � �������� ������� ����� %���������� ��������������� ������������. 

(�) an isomorphism of vector bundles  $ \Xi:\nu(g)\to \oplus k\, \eta^*(\gamma)$ is well-defined. 
\end{definition}

For such mappings with prescribed   $G\rtimes\Z$-framing of normal bundle
the cobordism relation is well-defined, the definition is analogous to~\ref{def:bord} and omitted.
On the cobordism classes of $G\rtimes\Z$-framed immersions Abelian group structure is well-defined:
the sum of two immersions is defined as the disjoin union of the two parallel copies of immersions in $T$.

Denote this cobordism group by $\Imm^{G\rtimes\Z}(n-kd,kd;T)$.

\subsection*{ Cobordism group $\Imm^{\Z_4 \rtimes \Z}(n-2k,2k;T)$}

Consider the group $\Z_4\rtimes \Z$. This group is generated by elements  $b\in\Z_4$, $\tilde a\in\Z$ 
with the relations 
$b^4=e$,  $\tilde a^{-1}b\tilde a=b^3$. A representation $A$ of the group $\Z_4\rtimes \Z$
in $\R^2$ is given by the formulas: 
$$
b \mapsto \left(\begin{matrix}0&-1\\1&0\end{matrix}\right),\ \ \ 
\tilde a \mapsto
 \left(\begin{matrix}0&1\\1&0\end{matrix}\right).$$
Thus, one may consider the group 
  $\Imm_T^{\Z_4 \rtimes \Z}(n-2k,2k;T)$, which is gepresented by immersions $g:N^{n-2k}\looparrowright T\subset \R^n$  with $\Z_4\rtimes \Z$-framings.

If $k$ is even, the bundle $\nu(g)$  is oriented  (because $w_1(\nu(g)) = k w_1(\eta^*(\gamma))=0$ even for an non-oriented $\eta^*(\gamma)$). Then  $N$  and $\bar L$ are oriented and for prescribed orientations the Herbert formula is satisfied:
\begin{equation*}
j_*[\bar L] + e_*(\nu(g))=0  \hbox{ in the group } H_{n-4k}(N,\Z).
\end{equation*}

In the case $k$ is odd, the considered manifolds, generally speaking, are non-oriented.
But, the Herbert formula is generalized using oriented local coefficient system.
 
Denote on the space $B\Z_4\rtimes\Z$ the local coefficients system  $\Z^{tw}$ of
$\pi_1(B\Z_4\rtimes\Z)=\Z_4\rtimes\Z$ by authomorphisms of $\Z$ by the following formula:
$$
\Z_4\rtimes\Z \stackrel{pr_\Z}{\longrightarrow} \Z \stackrel{\mod 2}{\longrightarrow} \Z_2 =\mathrm{Aut}(\Z).
$$
Analogously, the corresponding local system is well-defined for an arbitrary $N^{n-kd}$,
which is included into a triple
 $(g,\eta,\Xi)$, representing an element in $\Imm^{\Z_4\rtimes\Z}(n-kd,kd;T)$. To define the local 
system the pull-back $\eta^*(\Z^{tw})$ is used. 
This local system on $N$ is also denoted by $\Z^{tw}$, this gives no confusion. 

In this situation, let us discuss terms in the Herbert formula.
Let $(g:N^{n-2k}\looparrowright T\subset \R^n, \eta : N \to B\Z_4\rtimes \Z,\Xi)$ be a
representation of an element in $\Imm^{\Z_4\rtimes \Z}(n-2k,2k;T)$. 
Double-point manifold of $\bar L$ is oriented. This follows from the formula:
$\nu (\bar L\looparrowright \R^n) = \nu (\bar N\looparrowright \R^n)|_{\bar L}\oplus \nu (\bar L\looparrowright N)=
\nu (g)|_{\bar L}\oplus \tau^*\nu (g)|_{\bar L}$,
where $\tau:\bar L\to \bar L$ is the standard involution, which is induced by permutation of coordinates in    $N\times N$. The oriented classes $w_1$ of the bundles $\nu (g)|_{\bar L}$ and $ \tau^*\nu (g)|_{\bar L}$
coincide, because a closed loop  
 $\gamma$ in $\bar L$  changes the orientation of the fibre   $\nu (g)|_{\bar L}$, iff
the loop $\tau\circ \gamma$  also changes the orientation. Thus, one may define the fundamental class
 $[\bar L]\in H_{n-4k}(\bar L,\Z)$. This proves that the first term $i_{\bar L\looparrowright N}([\bar L]) \in H_{n-4k}(N,\Z)$ in the formula is well-defined. 

Let us consider the second term. The main difficult concerns to non-orientable bundles.
But in the considered case this problem is solved. Denote by  $o(\nu(g))$ the orienting 
system (the  locally coefficients system with coefficients $\Z$) of the bundle  
$o(\nu(g))$. Recall, that on the manifold $N$ a local coefficient $\Z$-system is given by the corresponding element
$\mathrm{Hom}\,(\pi_1(N), \mathrm{Aut}\,(\Z))= H^1(N,\Z_2)$. The oriented system $o(\nu(g))$ is given by the class 
 $w_1(\nu(g))\in H^1(N,\Z_2) $. For a bundle   $\nu(g)$ the Euler class  $e(\nu(g))\in H^{2k}(N,o(\nu(g)))$ is well-defined. Alternatively, the manifold $N$ is oriented with the local coefficients system  
  $o(TN)$. Using the decomposition  $TN\oplus \nu(g)=n\e_N$, where $\e_N$ is the trivial line bundle over
$N$, the isomorphism  $o(TN)=o(\nu(g))$ is well-defined. Thus, the fundamental class
 $[N]\in H_{n-2k}(N,o(\nu(g))) $ is well-defined.
Obviously, $o(\nu(g))=(\eta^*(\Z^{tw}))^{\otimes k}$. Then $H_{n-2k}(N,o(\nu(g))) = H_{n-2k}(N,(\eta^*(\Z^{tw}))^{\otimes k} )$. From the last formula the Euler homology class
$e_*(\nu(g))\in H_{n-4k}(N, \Z)$ is defined by
 $[N]\cap e(\nu(g)) = [N]\cap(e(\eta^*(\gamma)))^k$.

\begin{theorem}
Assume that the manifold $N$ is equipped by  $\Z_4\rtimes \Z$-framing. Then in the group
 $ H_{n-4k}(N, \Z) $ the following formula is well-defined:
 \begin{equation}\label{eq:herbert1}
j_*[\bar L] + e_*(\nu(g))=0.
\end{equation}
\end{theorem}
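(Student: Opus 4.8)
The plan is to reduce the twisted-coefficient statement \eqref{eq:herbert1} to the classical $\Z/2$ Herbert formula \eqref{eq:herbert} by a careful bookkeeping of orientation systems, and then upgrade the mod-$2$ equality to an integral equality over the local system $o(\nu(g))=(\eta^*(\Z^{tw}))^{\otimes k}$ by checking that the two terms $j_*[\bar L]$ and $e_*(\nu(g))$ already carry compatible orientations with $\textit{opposite}$ signs. First I would record the two orientation facts established in the text just before the statement: (1) $\bar L$ (equivalently the double-point set of $\bar N$) is genuinely orientable, so $[\bar L]\in H_{n-4k}(\bar L,\Z)$ is defined and $j_*[\bar L]\in H_{n-4k}(N,\Z)$ makes sense with honest integer coefficients; and (2) $N$ carries the local system $o(TN)=o(\nu(g))=(\eta^*\Z^{tw})^{\otimes k}$, with fundamental class $[N]\in H_{n-2k}(N,o(\nu(g)))$, against which capping with $e(\nu(g))\in H^{2k}(N,o(\nu(g)))$ lands $e_*(\nu(g))$ in $H_{n-4k}(N,\Z)$ since the two twists cancel. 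So both terms of \eqref{eq:herbert1} live in the untwisted group $H_{n-4k}(N,\Z)$ — the content is the relation between them.

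The main step is a local/Poincar\'e-dual reformulation. I would pass to the Thom space / normal-bundle picture: let $\bar N\looparrowright\R^n$ be obtained from $g$ and let $W$ be a regular neighbourhood of the self-intersection locus $g(\bar L)$ in $\R^n$; the double-point set sits inside $N$ as $j(\bar L)$, and Herbert's argument identifies, at the chain level, the class Poincar\'e-dual in $N$ to $j_*[\bar L]+e_*(\nu(g))$ with the obstruction to perturbing $g$ to an immersion with fewer double points — which vanishes. Concretely I would use the standard approach (as in \cite{eccles-grant, Grant}): pull back the diagonal class, or equivalently evaluate the composite $N\to T(\nu g)\to T(\nu g)\wedge T(\nu g)$ and read off the self-intersection; each of these constructions is $G\rtimes\Z$-equivariant, i.e. respects the map $\eta$, so every class appearing is naturally a class with coefficients in the appropriate tensor power of $\eta^*(\Z^{tw})$. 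Tracking the twist through the Thom isomorphism $H^{*}(N;o(\nu g))\cong \tilde H^{*+2k}(T(\nu g);\Z)$ and through the doubling map gives the twisted Herbert identity $j_*[\bar L]+e_*(\nu g)=0$ in $H_{n-4k}(N;(\eta^*\Z^{tw})^{\otimes 0})=H_{n-4k}(N;\Z)$, because on the self-intersection manifold the two contributing normal factors are $\nu(g)|_{\bar L}$ and $\tau^*\nu(g)|_{\bar L}$, whose twists $(\eta^*\Z^{tw})^{\otimes k}$ agree (the loop $\gamma$ flips one iff $\tau\gamma$ flips the other, as noted in the excerpt), so their tensor product is $(\eta^*\Z^{tw})^{\otimes 2k}\cong\Z$ untwisted.

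The hard part is $\textit{sign bookkeeping}$ rather than topology: one must verify that the natural orientation of $\bar L$ induced by $\Xi$ and the involution $\tau$, when pushed forward by $j$, matches the orientation that appears in the twisted Thom-class computation with exactly the sign that makes the sum — not the difference — vanish. I would handle this by a local model at a single transverse double point: choose coordinates in which $g$ is two coordinate planes through $0$ in a chart of $T\subset\R^n$, compute the two contributions to the Poincar\'e dual of the self-intersection directly (one from $[\bar L]$, one from the Euler class of $\nu(g)$, which at a nondegenerate point contributes $\pm1$ times the local intersection of $N$ with a pushed-off copy), and check the relative sign is $-1$ after identifying the twisted coefficient fibres via $\Xi$; then globalise using that the local systems involved are pulled back from $B\Z_4\rtimes\Z$ and the whole construction is natural. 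As a sanity check I would confirm that reducing mod $2$ recovers \eqref{eq:herbert}, and that when $k$ is even (so $\Z^{tw}$ pulls back trivially by the parity argument in the excerpt, $w_1(\nu g)=k\,w_1(\eta^*\gamma)=0$) the statement specialises to the oriented integral Herbert formula already quoted. The only genuinely new ingredient is that the relevant twist is $(\eta^*\Z^{tw})^{\otimes k}$ and that capping against $[N]\in H_*(N;(\eta^*\Z^{tw})^{\otimes k})$ untwists the Euler class; once this is set up the rest is Herbert's original argument read in twisted coefficients.
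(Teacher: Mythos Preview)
Your approach is essentially the same as the paper's, whose proof consists of a single sentence: ``The proof follows from a straightforward generalisation of classical arguments~\cite{herbert}.'' Your elaboration --- running Herbert's argument (or the Eccles--Grant Thom-space version) with the orientation local system $o(\nu(g))=(\eta^*\Z^{tw})^{\otimes k}$ in place of constant coefficients, and verifying that the two twists cancel on $\bar L$ so that both terms land in $H_{n-4k}(N;\Z)$ --- is exactly what such a generalisation amounts to, and the local-model sign check you describe is the standard way to pin down the relative sign.

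One small caveat: your opening sentence proposes to ``reduce to the classical $\Z/2$ Herbert formula \dots\ and then upgrade the mod-$2$ equality to an integral equality.'' Taken literally this is not a valid strategy --- a mod-$2$ identity cannot be promoted to an integral one without redoing the argument --- but in fact the body of your proposal does not attempt this; it reruns Herbert's construction in twisted coefficients from the outset, which is correct. I would simply drop or rephrase that first sentence.
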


\begin{proof}
The proof follows from a straightforward generalisation of classical arguments~\cite{herbert}.
\end{proof}

\begin{remark}
Obviously, the theorem is generalised for  
 $G\rtimes Z$-framing, in the case of representation  $G\to SO(d)$, which is extended to a representation 
 $G\rtimes\Z \to O(d)$
\end{remark}

\subsection*{Hurewicz homomorphim}
Define the Hurewicz homomorphism $$H: \Imm^{\Z/4\rtimes \Z}(n-2k,2k;T) \to H_{n-2k}(B\Z/4\rtimes\Z;(\Z^{tw})^{\otimes k}).$$
Take an $x\in  \Imm^{\Z/4\rtimes \Z}(n-2k,2k;T)$, which is represented by a triple 
$(g:N^{n-2k}\looparrowright T\subset \R^n, \eta : N \to B\Z_4\rtimes \Z,\Xi)$.
The fundamental class 
$[N]\in H_{n-2k}(N;\eta^*((\Z^{tw})^{\otimes k}))$ is well-defined.
Define $H(x)=\eta_*([N])$. Obviously, this class depends not of a representation of $x$. 

% ����������� ������� ������������ �����������
%$$H: Imm^{\Z/4}(n-2k,2k) \to H_{n-2k}(B\Z/4;\Z).$$
%����� ���������, ��� �������� $i:\Z_4 \subset \Z_4\rtimes \Z$ ���������� ������������� ���������
%\xymatrix{
%\Imm^{\Z_4}(n-2k,2k)\ar[r]^{H}\ar[d]_{i_*} & H_{n-2k}(B\Z_4;\Z)\ar[d]^{i_*}\\
%\Imm^{\Z_4\rtimes\Z}(n-2k,2k;T)\ar[r]^{\ \ H^{tw}} &H_{n-2k}(B\Z_4\rtimes\Z;(\Z^{tw})^{\otimes k})
%}

%\subsection*{������ �����. ����������� �������� $H^{tw}: Imm^{\Z/4}(n-2k,2k;\Z^{tw}) \to H_{n-2k}(B(\Z/4);\Z^{tw})$}
%����� $( g,\eta,\Xi)$ ���������� ������� � $Imm^{\Z/4}(n-2k,2k;\Z^{tw})$.
%����� $x \in \Delta \subset M^{n-2k}$-- ����� ���������, $l$--���� � $M^{n-2k}$, ����������� $l(0)=pt$, $l(1)=x$.
%��� ���������� ������ $pt$ ���� ���������� ����������� $\eta^{(2)}: pt \mapsto B(\Z/4)$, ������� ������������ �� ������������� 
%����� $l$ � ����� $x$ �� ����������� $x \mapsto B(\Z/4)$. ��� ������ ����  $l$ � ������ ��������� ����������
%$t \in \Z^{tw} = \pi_1(S^1)$, ����������� $\eta^{(2)}$ ���������� �� ���������� � ������������ $conj: B(\Z/4) \to %B(\Z/4)$. ��� �����, ������� $H^{tw}(f,\eta,\Xi) \in H_{n-2k}(B(\Z/4);\Z^{tw})$ ��������� ���������.

\begin{lemma}\label{1}
(a) The mapping 
 $i: B\Z_4 \subset B\Z_4\rtimes\Z$ of the classified spaces, corresponded to the monomorphism  $\Z_4 \subset \Z_4\rtimes\Z$,  induces the morphism $i^*(\Z^{tw}) = \Z$ of local coefficients systems. 

(b) Assuming $q=4m+1$, the isomorphism is well-defined:
$$i_*:  \Z_4=H_{q}(B\Z/4;\Z) \cong H_{q}(B\Z/4;i^*(\Z^{tw})) \to H_{q}(B\Z/4\rtimes \Z;\Z^{tw}).$$

(c) Assuming
$q=4m+3$, the isomorphism is well-defined:
$$i_*:   \Z_4=H_{q}(B\Z/4;\Z) \to H_{q}(B\Z/4\rtimes \Z;\Z).$$

(d) $i^*:H^{2}(B\Z/4\rtimes \Z;\Z^{tw})\to H^{2}(B\Z/4;\Z)=\Z_4$
is isomorphism, and the Euler class $e=e(\gamma)$ of the universal bundle   $\gamma$ 
is a generator. Moreover, the formula $\cap e \mapsto -\cap e$ determines isomorphisms
 $H_{4m+1}(B\Z_4\rtimes Z; \Z^{tw})\to
H_{4m-1}(B\Z_4\rtimes Z; \Z)$ and $H_{4m-1}(B\Z_4\rtimes \Z; \Z)\to H_{4m-3}(B\Z_4\rtimes \Z; \Z^{tw})$
 for $m\ge 1$.
\end{lemma}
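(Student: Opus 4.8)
The plan is to derive all four statements from the locally trivial fibration $pr_S\colon B\Z_4\rtimes\Z\to S^1$ with fibre $i\colon B\Z_4\hookrightarrow B\Z_4\rtimes\Z$. Its total space is the mapping torus of the monodromy $B\theta$, where $\theta\colon\Z_4\to\Z_4$ is the inversion $b\mapsto b^{-1}$ (read off from $\tilde a^{-1}b\tilde a=b^3$), so the associated Wang exact sequences in homology and in cohomology --- equivalently, the Lyndon--Hochschild--Serre spectral sequence of $1\to\Z_4\to\Z_4\rtimes\Z\to\Z\to1$, which degenerates since $\Z$ has cohomological dimension $1$ --- reduce everything to arithmetic with $H_*(B\Z_4)$ and $H^*(B\Z_4)$. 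Part (a) is immediate: $\Z_4=\ker(pr_\Z)$, so the composite $\Z_4\subset\Z_4\rtimes\Z\xrightarrow{pr_\Z}\Z\to\Z_2$ is trivial and $i^*(\Z^{tw})$ is the constant system $\Z$. I record the rest of the input: $\tilde H_{2i-1}(B\Z_4;\Z)\cong\tilde H^{2i}(B\Z_4;\Z)\cong\Z_4$ for $i\ge1$ and all other reduced groups vanish; $\theta$ acts by $-1$ on $H_1(B\Z_4;\Z)$, hence, by naturality and universal coefficients, by $-1$ on $e\in H^2(B\Z_4;\Z)$, hence by $(-1)^i$ on $e^i\in H^{2i}(B\Z_4;\Z)$ and dually on $H_{2i-1}(B\Z_4;\Z)$; and in the Wang sequences for $\Z^{tw}$ the coefficient monodromy contributes an additional sign $\epsilon=-1$, whereas for constant $\Z$ it contributes $\epsilon=+1$. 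Thus the Wang self-map, in the degree tied to $e^i$ (namely $2i$ in cohomology, $2i-1$ in homology), is up to sign multiplication by $\epsilon(-1)^i-1$.

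For (b), put $q=4m+1=2i-1$, so $i=2m+1$ is odd and $\epsilon=-1$; then $\epsilon(-1)^i-1=(-1)(-1)-1=0$, so this self-map vanishes on $H_{4m+1}(B\Z_4;\Z)=\Z_4$. The neighbouring fibre group $H_{4m}(B\Z_4;\Z)$ is $0$ for $m\ge1$, and for $m=0$ the self-map on $H_0(B\Z_4;\Z)=\Z$ is multiplication by $\epsilon-1=-2$, which is injective; either way the Wang sequence collapses to the isomorphism $i_*\colon H_{4m+1}(B\Z_4;i^*\Z^{tw})\xrightarrow{\,\cong\,}H_{4m+1}(B\Z_4\rtimes\Z;\Z^{tw})$. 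For (c), put $q=4m+3=2i-1$, so $i=2m+2$ is even and now $\epsilon=+1$; again $\epsilon(-1)^i-1=0$ on $H_{4m+3}(B\Z_4;\Z)=\Z_4$, the neighbouring group $H_{4m+2}(B\Z_4;\Z)$ vanishes, and the Wang sequence gives $i_*\colon\Z_4=H_{4m+3}(B\Z_4;\Z)\xrightarrow{\,\cong\,}H_{4m+3}(B\Z_4\rtimes\Z;\Z)$.

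For the cohomology assertion of (d) I apply the Wang sequence in cohomology in degree $2$: its cokernel term is built from $H^1(B\Z_4;\Z)=0$ and so vanishes, while its kernel term is the kernel of the self-map $\epsilon(-1)^1-1=(-1)(-1)-1=0$ on $H^2(B\Z_4;\Z)=\Z_4$, i.e.\ all of $\Z_4$; hence $i^*\colon H^2(B\Z_4\rtimes\Z;\Z^{tw})\xrightarrow{\,\cong\,}H^2(B\Z_4;\Z)=\Z_4$. Restricted to $B\Z_4$ the representation $A$ sends $b$ to rotation by $\pi/2$, so $\gamma|_{B\Z_4}$ is the $2$-plane bundle of a faithful character $\Z_4\hookrightarrow U(1)$ and its Euler class is a generator of $H^2(B\Z_4;\Z)=\Z_4$; since $i^*$ is an isomorphism, $e=e(\gamma)$ is a generator of $H^2(B\Z_4\rtimes\Z;\Z^{tw})$.

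It remains to get the cap-product isomorphisms. On $B\Z_4$, capping with the generator of $H^2(B\Z_4;\Z)$ is an isomorphism $H_q(B\Z_4;\Z)\to H_{q-2}(B\Z_4;\Z)$ for every $q\ge3$ (the classical module structure of $H_*(B\Z_4)$ over $H^*(B\Z_4;\Z)=\Z[e]/(4e)$; for odd $q$ it says the $\Z_4$'s in consecutive odd degrees are identified, which one sees from the truncated-polynomial cohomology rings of the lens-space skeleta of $B\Z_4$ by a duality argument). Feeding this into the naturality relation $i_*\bigl(\alpha\cap i^*e\bigr)=i_*(\alpha)\cap e$ (together with $\Z^{tw}\otimes\Z^{tw}\cong\Z$), and using that $i_*$ is an isomorphism on both ends by (b) and (c) and that $i^*e$ is a generator by the previous paragraph, one concludes that $\cap e$, up to sign, is an isomorphism $H_{4m+1}(B\Z_4\rtimes\Z;\Z^{tw})\to H_{4m-1}(B\Z_4\rtimes\Z;\Z)$ and $H_{4m-1}(B\Z_4\rtimes\Z;\Z)\to H_{4m-3}(B\Z_4\rtimes\Z;\Z^{tw})$ for $m\ge1$; the sign is a matter of orientation convention, fixed for compatibility with the Herbert formula. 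The one place needing care throughout is the bookkeeping of the two independent twists --- the geometric monodromy $\theta$ contributes $(-1)^i$ and the coefficient system $\Z^{tw}$ contributes $\epsilon=-1$ --- which conspire to $+1$ exactly in the degrees named in the lemma, so that the Wang self-map vanishes and the $\Z_4$ is carried isomorphically off the fibre, whereas in the complementary degrees they combine to $-2$ and only a $\Z_2$ survives; that is precisely why $\Z^{tw}$ must be used in degrees $\equiv1\pmod4$ and constant $\Z$ in degrees $\equiv3\pmod4$.
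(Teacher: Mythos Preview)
Your argument is correct and follows essentially the same route as the paper: both use the degenerate Serre/Lyndon--Hochschild--Serre spectral sequence (your Wang sequence) for the fibration $B\Z_4\to B\Z_4\rtimes\Z\to S^1$, reducing everything to the sign of the monodromy on $H_*(B\Z_4;\Z)$ and then adjusting by the extra $-1$ coming from the coefficient system $\Z^{tw}$. The only real difference is in how that monodromy sign is obtained --- you read it off algebraically from $\theta(b)=b^{-1}$ via the ring structure $H^*(B\Z_4;\Z)=\Z[e]/(4e)$, whereas the paper builds the explicit model $E(\Z_4\rtimes\Z)=S^\infty\times\R$ with $\tilde a$ acting by complex conjugation and observes that conjugation on $S^{2i-1}\subset\C^i$ has degree $(-1)^i$; your treatment of part~(d) is in fact more detailed than the paper's, which dismisses it as ``analogous''.
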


\begin{proof}
Recall, that  $H_{q}(B\Z/4;\Z)=\Z_4$ for an arbitrary odd $q\ge 1$, and $H_{0}(B\Z/4;\Z)=\Z$.
Homology of the group $\Z$ in a module  $M$ are known: $H_0(B\Z,M)=M_{\Z}$ 
(coinvariants of  the $\Z$-action  on $M$)
and $H_1(\Z,M)=M^{\Z}$ (invariants of  the $\Z$-action  on $M$).

Consider the space $E\Z_4\rtimes\Z$, this space is realized as $S^\infty\times\R$.
Here we have $S^\infty = \bigcup\limits_n\{(z_1,\ldots, z_n)\in \C^n\;|\; \sum|z_k|^2=1\}$.
The action of the generators $b,\tilde a$ of the group $\Z_4\rtimes\Z$ are given by the formulas:
 $b(z,t)=(e^{\pi i/2}z,t)$, and
$\tilde a(z,t)=(\bar z,t+1)$. It is easy to see that the relations $b^4=1$, and $\tilde a^{-1}b \tilde a=b^3$ are satisfied, the corresponding (free) action of   $\Z_4\rtimes\Z$ on
  $E\Z_4\rtimes\Z=S^\infty\times\R$ is well-defined.

Consider the spectral sequence of the bundle	 $B\Z_4\to B\Z_4\rtimes\Z\to B\Z$ 
for the trivial  module  $\Z$ of coefficients. The second term of this sequence 
$E^2_{pq  }= H_p(B\Z, H_q(B\Z_4,\Z))$. By dimension reason,
  $E^2_{**}=E^\infty_{**}$. Non-trivial terms could be only 
$E^2_{0q}$ and $E^2_{1,q-1}$ for odd
 $q\ge 1$ and for  $q=0$. 
Therefore, the problem is to calculate the action of the generator 
 $\tilde a\in \Z$ in homology  $H_q(B\Z_4,\Z)$.

Denote the corresponding homomorphism by $\tilde a_*$. Because non-trivial groups $H_q(B\Z_4,\Z)$  are isomorphic to $\Z$, or to $\Z_4$, the action $\tilde a_*$ in homologies  $H_q(B\Z_4,\Z)$ is the multiplication on  $\pm 1$. The action of the generator $\tilde a $  coincides to the conjugation of all coordinates in $S^\infty$, therefore the sign depends of   $q$ as follows:  $\tilde a_*=1$ for $q=4m+3$, and  $\tilde a_*=-1$ for $q=4m+1$.
Therefore $E^2_{0q}$ and $E^2_{1q}$  are isomorphic to  $\Z_4$ for $q=4m+3$ and to  $\Z_2$ for $q=4m+1$.
Statement (a) is proved.

Calculation of cohomologies $H_*(B\Z_4\rtimes\Z,\Zt)$  are analogous. 
Let us note, that to calculate the  action of $\Z$ in homologies $H_q(B\Z_4,i^*(\Zt))$ an additional sign  $-1$ is required, because the generator $\tilde a$ acts in the module of the coefficients
by multiplication on   $-1$. Therefore, we get:
$\tilde a_*=1$ for $q=4m+1$ and  $\tilde a_*=-1$ for $q=4m+3$.
The terms $E^2_{0q}$ � $E^2_{1q}$  are isomorphic  to $\Z_4$ for $q=4m+1$ and to $\Z_2$ for $q=4m+3$.
Proofs of the last statements are analogous.

\end{proof}

\subsection*{Self-intersection manifolds of $\Z/4 \rtimes\Z$-framed immersions  }

Let $(g,\eta,\Xi)$ be a $\Z/4 \rtimes\Z$-framed immersion. 
The self-intersection manifold  $L^{n-4k}$ of the immersion $g$ is defined as
$L=\bar L/\tau$, where  $\tau$ is the coordinate involution on $N\times N$. The 2-sheeted covering  $\pi:\bar L\to L$, which is called
the canonical covering, is well-defined. The following conditions are satisfied.

(1) There exists an immersion  $h: L^{n-4k} \looparrowright  T \subset \R^n$, 
for which 2-sheeted covering  $\bar L^{n-4k}$ is included in the following diagram, \\
\centerline{\xymatrix{
\bar L \ar[r]^<<<<<{j}\ar[d]_{\pi} & N^{n-2k}\ar[r]^>>>>>{\tilde g}&T\ar@{=}[d]\\
L\ar[rr]^{h} &&T}}
In this diagram the horizontal mappings are immersions. Recall, that the immersion
 $ j:\bar L\to N$ is defined by composition of the embedding  $\bar L\subset N\times N$ 
and the projection  $N\times N\to N$  onto the first factor. 

(2) The normal bundle over $\bar L$ is decomposed into 2 factors $\nu(g)|_{\bar L} \oplus \tau*\nu(g)|_{\bar L}$, where $\tau: \bar L\to \bar L $ is the involution, which permutes the sheets of the double covering. Therefore the normal bundle over  $\bar h:\bar L\looparrowright  T$ is decomposed 
into the Whitney sum of $k$ isomorphic copies of the bundle
$\eta\oplus \tau^*\eta$, each copy is classified by a mapping in 
$B(\Z_4\rtimes \Z)\times (\Z_4\rtimes \Z)$.
Because $\tilde h$ and $\tilde h\circ \tau$ coincide by definition of the manifold 
 $\bar L$, compositions
 $\nu|_{\bar L}$ and $\tau^*\nu|_{\bar L}$ with the projection  $pr_S : B\Z_4\rtimes Z \to B\Z$ 
coincide. Therefore, the classifying mapping of the bundle
 $\eta\oplus \tau^*\eta$, is decomposed using the mapping  $B(\Z_4\times  \Z_4)\rtimes\Z \to B(\Z_4\rtimes \Z)\times (\Z_4\rtimes \Z)$, which is induced by the diagonal inclusion
 $\Z\subset \Z\times\Z$. This shows that the normal bundle over
 $h:L\looparrowright  T\subset \R^n$  is decomposed into the Whitney sum of $k$ copies of
a 4-dimensional bundle, which is classified by the mapping
  $\zeta:L\to B((\Z_4\times  \Z_4)\rtimes\Z)\rtimes \Z_2$.
The corresponding universal bundle over  $B((\Z_4\times  \Z_4)\rtimes\Z)\rtimes \Z_2$ 
will be denoted by
 $\gamma^{[2]}$ for short. Let us note, that 
  $((\Z_4\times  \Z_4)\rtimes\Z)\rtimes \Z_2=
 ((\Z_4\times  \Z_4)\rtimes\Z_2)\rtimes \Z$, because the actions of $\Z$ and $\Z_2$ on 
$\Z_4\times  \Z_4$ are commuted with each other
(a generator of  $\Z$ multiplies the both generators elements of $\Z_4\times  \Z_4$ by $-1$,  
 the generator in  $\Z_2$ changes the above generators; the action between $\Z$ and $\Z_2$ 
is trivial).

Thus,, the triple 
$(g,\eta,\Xi)$ with a $\Z_4\rtimes \Z$-framing determines a triple
 $(h,\zeta,\Lambda)$, where $h:L^{n-4k}\looparrowright \R^n$ is an 
immersion,  $\zeta:L\to B((\Z_4\times  \Z_4)\rtimes\Z)\rtimes \Z_2$ is a continous mapping
and  $\Lambda:\nu (h)\to k\zeta^*(\gamma^{[2]})$ is an isomorphism.

\subsection*{Subgroups $(\Z_4 \times \Z_4) \rtimes \Z_2$ and $((\Z_4 \times \Z_4) \rtimes \Z_2)\rtimes \Z$ and its representation in  $SO(4)$}

The representation $A^{[2]}$ of the group $((\Z_4 \times \Z_4) \rtimes \Z_2)\rtimes Z$ in $O(4)$
is given by the formulas:
\begin{align*}
&b_1=(((1,0),0),0)\mapsto \left(
\begin{array}{cccc}
0 & -1 & 0 & 0 \\
1 & 0 & 0 & 0 \\
0 & 0 & 1 & 0 \\
0 & 0 & 0 & 1 \\
\end{array}
\right),& &
b_2=(((0,1),0),0)\mapsto \left(
\begin{array}{cccc}
1 & 0 & 0 & 0 \\
0 & 1 & 0 & 0 \\
0 & 0 & 0 & -1 \\
0 & 0 & 1 & 0 \\
\end{array}
\right),\\
&t=(((0,0),1),0)\mapsto \left(
\begin{array}{cccc}
0 & 0 & 1 & 0 \\
0 & 0& 0 & 1 \\
1 & 0 & 0 & 0 \\
0 & 1 & 0 & 0 \\
\end{array}
\right),& &
\tilde a=(((0,0),0),1)\mapsto \left(
\begin{array}{cccc}
0 & 1 & 0 & 0 \\
1 & 0 & 0 & 0 \\
0 & 0 & 0 & 1 \\
0 & 0 & 1 & 0 \\
\end{array}
\right).
\end{align*}

\subsubsection*{ Subgroup $\Q \subset (\Z_4 \times \Z_4) \rtimes \Z_2$}

By $\Q$ is denoted the standard integer quaternion group, generated by  $\pm 1, \pm\i, \pm\j, \pm\k$, this group is included into
$(\Z_4 \times \Z_4) \rtimes \Z_2$ using the correspondence:
$$
1\mapsto ((0,0),0),\ \ 
\i\mapsto ((1,-1),0),  \ \ 
\j\mapsto ((1,1),1), \ \ 
\k\mapsto ((2,0),1).
$$

Then the standard representation of the group $\Z_4$  in $\R^2$ by rotations trough angles divided by $\pi/2$, induces the representation of the group
 $(\Z_4 \times \Z_4) \rtimes \Z_2$ in $\R^4$. This representation maps the generators of $\Q$
by the following formulas:

 $$
\i\mapsto\left(
\begin{array}{cccc}
0 & -1 & 0 & 0 \\
1 & 0 & 0 & 0 \\
0 & 0 & 0 & 1 \\
0 & 0 & -1 & 0 
\end{array}
\right),\ \ 
\j\mapsto\left(
\begin{array}{cccc}
0 & 0 & 0 & -1 \\
0 & 0 & 1 & 0 \\
0 & -1 & 0 & 0 \\
1 &  0 & 0 & 0 
\end{array}
\right),\ \ 
\k\mapsto\left(
\begin{array}{cccc}
0 & 0 & -1 & 0 \\
0 & 0 & 0 & -1 \\
1 & 0 & 0 & 0 \\
0 & 1 & 0 & 0 \\
\end{array}
\right).
$$

\subsubsection*{Subgroups $\Z_4 \times \Z_2\subset (\Z_4\times\Z_4) \rtimes \Z_2$ and 
 $(\Z_4 \times \Z_2)\rtimes \Z \subset ((\Z_4\times\Z_4) \rtimes \Z_2)\rtimes \Z$}

The standard diagonal mapping 
$\Z_4\to \Z_4\times \Z_4$  induces an inclusion  $\Z/4 \times \Z/2 $ onto a subgroup in 
$ (\Z/4 \times \Z/4) \rtimes \Z/2$.
The generators   $b$, $t$ of the group $\Z/4 \times \Z/2$ 
are mapped into the elements
 $((1,1),0)$ and $((0,0),1)$ of the group
$(\Z/4 \times \Z/4)\rtimes \Z/2$ therefore this elements are represented by the following 
formulas:

%\begin{eqnarray}\label{i}
$$b\mapsto \left(
\begin{array}{cccc}
0 & -1 & 0 & 0 \\
1 & 0 & 0 & 0 \\
0 & 0 & 0 & -1 \\
0 & 0 & 1 & 0 \\
\end{array}
\right),\ \ 
%\end{eqnarray}
%\begin{eqnarray}\label{t}
t\mapsto \left(
\begin{array}{cccc}
0 & 0 & 1 & 0 \\
0 & 0 & 0 & 1 \\
1 & 0 & 0 & 0 \\
0 & 1 & 0 & 0 \\
\end{array}
\right).$$
%\end{eqnarray}

The subgroup $\Z/4 \times \Z/2  \rtimes \Z \subset ((\Z/4 \times \Z/4)\rtimes \Z/2) \rtimes \Z$ 
is defined as extension of the considered subgroup by he inclusion  $(\Z/4 \times \Z/4) \rtimes \Z/2
\subset ((\Z/4 \times \Z/4) \rtimes \Z/2) \rtimes \Z$.

\subsection*{Cap-product with 2-dimensional class}
For a representation   $(g:N^{n-2k}\looparrowright \R^n, \eta,\Xi)$ of a prescribed class
in  
$\Imm^{\Z/4\rtimes \Z}(n-2k,2k;T) $ the bundle   $\eta^*(\gamma)$ is well-defined. Trivial vectors of a generic section form a submanifold   $N_1\subset N$, 
moreover, for the image of the fundamental class
$[N_1]\in H_{n-2k-2}(N_1,(\Z^{tw})^{\otimes (k+1)})$ by the embedding  $i_N:N_1\to N $ 
the following formula is satisfied:
 $(i_N)_*[N_1] = [N]\cap \eta^*(e)$ in the group $H_{n-2k-2}(N, (\Zt)^{\otimes (k+1)})$. 
Therefore, in the group   $H_{n-2k-2}(B\Z_4\rtimes\Z, (\Zt)^{\otimes(k+1)})$ the following equation is satisfied: 
$H(N_1)=H(N)\cap e$ (or, $\eta_*\circ (i_N)_*[N_1]= \eta_*[N]\cap e$).

\subsection*{Manifolds of self-intersection points of the immersions $\bar L_1\subset N_1$  and $\bar L \subset N$}

Recall, in the Introduction the manifold  $\bar L$ is defined by the formula:
$$
\bar L = \{(x,y)\in N\times N\;|\; x\ne y, g(x)=g(y)\},
$$
and the immersion
$j: \bar L \subset N\times N \stackrel{pr_1}{\to} N$ is well-defined.
There exists a 2-sheeted covering $\pi :\bar L\to L$,  where $L = \bar L/\tau$, and $\tau$ 
is the involution on $N\times N$, which permutes the factors:  $\tau(x,y)=(y,x)$.
The manifold  $L_1$ and its 2-sheeted covering  $\bar L_1\subset N_1\times N_1$ are defined analogously.
Denote by $i_L$  the inclusion of $L_1$ into $L$.

Using the Herbert Theorem for immersions $g: N^{n-2k}\to \R^n$  � $g_1:N^{n-2k-2}\to \R^{n}$,
we get the following 2 relations:

\ \ \ \ $j_*[\bar L]+[N]\cap \eta^*(e)^k=0$ � $H_{n-4k}(N,(\Zt)^{\otimes 2k})$,

\ \ \ \ $(j_1)_*[\bar L_1]+[N_1]\cap \eta^*(e)^{(k+1)}=0$ � $H_{n-4k-4}(N,(\Zt)^{\otimes(2k+2)})$.

\noindent
Let us apply the homomorphism $(i_{N})_*$ for elements of the formula, in the group 
$H_{n-4k-4}(N,(\Zt)^{2k+2})$ we get the formula:
\begin{equation}\label{eq:vazhnoe}
j_*[\bar L]\cap \eta^*(e^2) = -[N]\cap \eta^*(e)^{k+2}  = -(i_N)_*[N_1]\cap \eta^*(e)^{k+1}=
(i_N)_*\circ(j_1)_*[\bar L_1].
\end{equation}

Let us calculate the element  $(i_L)_*[\bar L_1]$ in the group  $H_{n-4k-4}(\bar L, (\Zt)^{\otimes(2k+2)})$ by an alternative way. Obviously, the manifold  $\bar L_1$ 
is the intersection of the two submanifolds
 $\bar L \cap N_1$
and $\tau(\bar L \cap N_1)$, where $\tau:\bar L\to \bar L $ 
is the involution, which permutes the sheets of the covering  $\bar L\to L$.
From this fact we get in the group
$H_{n-4k-4}(L,(\Zt)^{\otimes (2k+2)})$ the following relation:
 $(i_L)_*[L_1] = [\bar L]\cap (e|_{\bar L} \cup \tau^*(e|_{\bar L}))$, where $e|_L=(\bar\zeta\circ pr_1)^*(e)$.
In a general case the classes $\tau^*(e|_{\bar L})$ and $e|_{\bar L}$ are related unpredictable way. 
But, in a several cases, when the structured group of the normal bundle over  
 $L$ is reduced to a special subgroup, this relation is described explicitly. 

Consider the commutative dyagrame: \\
\centerline{\xymatrix{
\bar L \ar[r]^<<<<<{\bar \zeta}\ar[d]_{\pi} & B(\Z_4\times \Z_4)\rtimes \Z\ar[d]_{\pi} \ar[r]^>>>>>>>{pr_1} &B\Z_4\rtimes \Z\\
L\ar[r]^<<<<<{\zeta} &B((\Z_4\times \Z_4)\rtimes\Z_2)\rtimes \Z}}
%������������ ��� ����� � $e|_L\in H^2(L,\Zt)$ ����� ��� $e|_L=(\zeta\circ pr_1)^*(e)$.
By the commutativity, relationship between   $e|_L$  and $\tau^*(e|_L)$
is determined, using  relationship between 
$(pr_1)^*(e)$ and $\tau^*((pr_1)^*(e))$ in the cohomologies of the classifying space
$B(\Z_4\times \Z_4)\rtimes \Z$. This is done in the following two lemmas.

Consider the mappings of the classifying spaces:
$$ I_1:BQ \subset B(\Z_4 \times \Z_4) \rtimes \Z_2 \subset B((\Z_4 \times \Z_4) \rtimes \Z_2) \rtimes \Z,$$
$$ I_2: B(\Z_4 \times \Z_2) \rtimes \Z  \subset B((\Z_4 \times \Z_4) \rtimes \Z_2) \rtimes \Z,$$
which are induced by inclusions onto the subgroups. By
 $\bar I_1$ � $\bar I_2$ let us denote the corresponding mappings of the  2-sheeted coverings 
over this spaces
$$ \bar I_1:B\Z_4\to B(\Z_4 \times \Z_4)  \rtimes \Z,$$
$$\bar I_2: B\Z_4 \times \rtimes \Z  \to  B(\Z_4 \times \Z_4) \rtimes \Z.$$

Put $e_0 = (pr_1)^*(e)$, $e_Q = (\bar I_1)^*(e_0)$, $e_{diag} = (\bar I_2)^*(e_0) $.

\begin{lemma}
The following formula is satisfied: 
  $\tau^*(e_Q)=-e_Q$.
\end{lemma}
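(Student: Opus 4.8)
The plan is to reduce the identity $\tau^{*}(e_Q)=-e_Q$ to a computation inside the cohomology of $B\Q$, where $\Q$ is the quaternion group and $\tau$ acts by the deck transformation of the double cover $B\Q \to B(\Q/Z)$ with $Z=\{\pm 1\}$ the centre. Concretely, I would first identify what $e_Q$ is: by definition $e_Q=(\bar I_1)^{*}(e_0)=(\bar I_1)^{*}(pr_1)^{*}(e)$, so $e_Q$ is the Euler class of the $2$-dimensional bundle over $B\Q$ obtained by restricting the first factor $\eta^{*}(\gamma)$ along $\Q \hookrightarrow (\Z_4\times\Z_4)\rtimes\Z_2$. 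Using the explicit embedding $\i\mapsto((1,-1),0)$, $\j\mapsto((1,1),1)$, $\k\mapsto((2,0),1)$ and the representation $A^{[2]}$, the first two coordinates of $\R^4$ carry the $2$-dimensional $\Q$-representation in which $\i$ acts as the rotation $\left(\begin{smallmatrix}0&-1\\1&0\end{smallmatrix}\right)$, $\j$ (because of its $\Z_2$-component $t$) acts by a reflection, and $\k=\i\j$ likewise. So $e_Q\in H^{2}(B\Q;\text{twisted }\Z)$ is the Euler class of this $2$-plane bundle, with twisting governed by $w_1$ of that bundle, which is the nontrivial map $\Q\to\Z_2$ killing $\i$.

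The key step is to pin down $\tau$. The double cover $\bar I_1: B\Q\to B(\Z_4\times\Z_4)\rtimes\Z$ sits over $I_1:B\Q\to B((\Z_4\times\Z_4)\rtimes\Z_2)\rtimes\Z$, and the deck involution of the upstairs double cover — the one permuting the two copies $\nu(g)|_{\bar L}$ and $\tau^{*}\nu(g)|_{\bar L}$ — restricts on $B\Q$ to conjugation by an element of $\Q\setminus Z$, e.g.\ by $\j$ (any element projecting to the generator of $\Z_2$ in $(\Z_4\times\Z_4)\rtimes\Z_2$ works, since $\Q$ maps onto that $\Z_2$). Thus on $H^{*}(B\Q)$, $\tau^{*}$ is the automorphism induced by $\operatorname{Ad}_{\j}$. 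Since $\Q$ is not abelian this is genuinely nontrivial: $\operatorname{Ad}_{\j}$ sends $\i\mapsto -\i$ and fixes $\k$ up to sign. On the $2$-plane representation above, $\operatorname{Ad}_{\j}$ is realised by the orthogonal change of basis $\left(\begin{smallmatrix}1&0\\0&-1\end{smallmatrix}\right)$ (it reverses the rotation $\i$), which is orientation-reversing. An orientation-reversing bundle automorphism negates the Euler class, giving $\tau^{*}(e_Q)=-e_Q$.

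So the steps, in order, are: (i) write down the $2$-plane bundle over $B\Q$ whose Euler class is $e_Q$, reading the $\Q$-action off the explicit matrices for $b_1,b_2,t,\tilde a$; (ii) identify $\tau$ on $B\Q$ as $\operatorname{Ad}_{g}$ for some $g\in\Q$ mapping to the generator of the relevant $\Z_2$; (iii) observe that $\operatorname{Ad}_{g}$ acts on that $2$-plane bundle by an orientation-reversing automorphism (equivalently, conjugation by $g$ reverses the circle-subgroup generated by $\i$); (iv) conclude $\tau^{*}e_Q=-e_Q$ from naturality of the Euler class and the sign rule under orientation reversal, checking along the way that the twisted coefficients $(\Z^{tw})^{\otimes\bullet}$ are carried along consistently (here $o(\nu)$ restricted to $B\Q$ matches the $w_1$ of the $2$-plane bundle, so the two orientation systems agree and the sign is unambiguous).

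The main obstacle I anticipate is \emph{bookkeeping the involution and the twisting simultaneously}: one must be careful that the $\tau$ appearing in the statement — the sheet-swap of $\bar L\to L$, pulled back to $B(\Z_4\times\Z_4)\rtimes\Z$ — really does restrict to an \emph{inner} automorphism of $\Q$ rather than an outer one, and that this inner automorphism is by an element outside the centre (an element in the centre would fix $e_Q$ and give the wrong sign). Verifying this amounts to checking that under $\Q\hookrightarrow(\Z_4\times\Z_4)\rtimes\Z_2$ the image of $\Q$ surjects onto the $\Z_2$ quotient — which is immediate from $\j\mapsto((1,1),1)$, $\k\mapsto((2,0),1)$ — and then tracking that the $\Z_2$-action defining the semidirect product is precisely conjugation by such an element. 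Once that identification is secure, the sign is forced and the rest is the standard naturality argument for Euler classes.
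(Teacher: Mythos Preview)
Your core idea --- that the deck transformation is realised by conjugation by $\j$, which sends $\i\mapsto -\i$ and hence reverses the orientation of the relevant $2$-plane bundle --- is exactly the mechanism the paper uses. However, you have misidentified the space on which $e_Q$ lives, and this propagates into several confusions.

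By definition $\bar I_1:B\Z_4\to B(\Z_4\times\Z_4)\rtimes\Z$, not $B\Q\to B(\Z_4\times\Z_4)\rtimes\Z$: the map $\bar I_1$ is the double cover of $I_1$, and the double cover of $B\Q$ corresponding to the index-$2$ subgroup $\Q\cap\big((\Z_4\times\Z_4)\rtimes\Z\big)=\langle\i\rangle$ is $B\Z_4$. Hence $e_Q=(\bar I_1)^{*}(e_0)\in H^{2}(B\Z_4;\Z)\cong\Z_4$, with \emph{untwisted} coefficients (the $2$-plane bundle restricted to $\langle\i\rangle$ is honestly oriented, since $\i$ acts by a rotation). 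Your placement of $e_Q$ in $H^{2}(B\Q;\text{twisted }\Z)$ and your description of $\tau$ as the deck transformation of $B\Q\to B(\Q/Z)$ are therefore off by one level in the tower; the relevant covering is $B\Z_4\to B\Q$, and $\tau$ is its nontrivial deck transformation.

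Once this is corrected, your argument collapses into the paper's: realise $B\Q=S^{\infty}_{\H}/\Q$ and $B\Z_4=S^{\infty}_{\H}/\langle\i\rangle$; then $\tau$ is multiplication by $\j$. The paper checks the sign on the dual side, computing $\tau_{*}$ on the generator $\alpha\in H_{1}(B\Z_4;\Z)$ represented by the loop $t\mapsto(\cos t+\i\sin t)x$, $t\in[0,\pi/2]$; multiplication by $\j$ sends this to $t\mapsto\j(\cos t+\i\sin t)x=(\cos t-\i\sin t)\j x$, the reversed loop based at $\j x$, so $\tau_{*}\alpha=-\alpha$ and hence $\tau^{*}e_Q=-e_Q$. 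Your version (conjugation by $\j$ inverts the rotation $\i$, so the induced bundle automorphism reverses orientation and negates the Euler class) is the same computation phrased in terms of representations rather than loops. The discussion of inner versus outer automorphisms and of twisted coefficients is unnecessary once you work on $B\Z_4$: there the bundle is oriented and $\tau$ is simply the nontrivial automorphism of $\Z_4$.
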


\begin{proof}
Let us realized the space $BQ$ as the factor of the quaternion units sphere
   $S^\infty_{\H}/Q = S^\infty_{\H}/\{\pm,1,\pm\i,\pm \j,\pm \k\} $, and the space 
$B\Z_4$ as the quotient of the considered sphere over the cyclic subgroup, generated by the quaternion unit $\i$:
   $S^\infty_{\H}/\Z_4 = S^\infty_{\H}/\{\pm,1,\pm\i\} $. In this case the involution
 $\tau:S^\infty_{\H}/\Z_4\to S^\infty_{\H}/\Z_4$, which permutes the sheets of the covering $B\Z_4\to BQ$,
 corresponds to the multiplication on the quaternion unit $\j$.

It is convenient to investigate the action    $\tau$ not on 
the 2-generator  $i^*(e_Q)$ of the cohomology group of the space  $B\Z_4$, but on the generator 
 $\alpha$ of 1-dimensional homology group, which corresponds to 
 $ i^*(e_Q)$ by the Universal Coefficients Theorem. The generator
 $\alpha$  is represented by the loop  $(\cos t + \i \sin t)x$, where  $t\in[0,\pi/2]$ and $x$ 
is an arbitrary point in
 $S^\infty_{\H}/\{\pm,1,\pm\i\}$. Two such loops determines the same class $\alpha$.
Therefore, the class $\alpha$ represents by the loop $(\cos t + \i \sin t)x$, as well as by the loop $\gamma_(t)=(\cos t + \i \sin t)\j x$, the last representative is more convenient to investigate  the
element $\tau_*(\alpha)$. Because the action  $\tau$ corresponds to the multiplication on  $\j$, the
element $\tau_*(\alpha)$ is represented by the loop $\gamma_2(t)=\j(\cos t + \i \sin t)x$.
Obviously, $\gamma_1(t)$ is given by  $\gamma_2(t)$ by the reversing of the orientation, 
therefore,  $\tau_*(\alpha)=-\alpha$.

\end{proof}

\begin{corollary}\label{cor:Q}
Let an element $\bar x\in H_q(B\Z_4\times\Z_4\rtimes\Z,(\Zt)^{\otimes k})$ 
belongs to the image  $(\bar I_1)_*$. Then for   $\bar x\cap \tau^*(e_0)\in H_{q-2}(B\Z_4\times\Z_4\rtimes\Z,(\Zt)^{\otimes k-1})$ the following equation is satisfied: 
 $\bar x\cap \tau^*(e_0)=-\bar x\cap e_0$.
\end{corollary}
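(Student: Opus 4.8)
The plan is to reduce the statement about $\bar x\cap\tau^*(e_0)$ to the previous lemma by exploiting naturality of the cap product. Write $\bar x=(\bar I_1)_*(\bar y)$ for some $\bar y\in H_q(B\Z_4;(\Zt)^{\otimes k})$, which is possible by hypothesis. Since $e_Q=(\bar I_1)^*(e_0)$, the projection formula gives
\begin{equation*}
\bar x\cap e_0=(\bar I_1)_*(\bar y)\cap e_0=(\bar I_1)_*\bigl(\bar y\cap(\bar I_1)^*(e_0)\bigr)=(\bar I_1)_*(\bar y\cap e_Q),
\end{equation*}
and similarly, because $(\bar I_1)^*(\tau^*(e_0))=\tau^*((\bar I_1)^*(e_0))=\tau^*(e_Q)$ (here I use that $\bar I_1$ is equivariant with respect to the sheet-swapping involutions on the two double covers, which is how $\bar I_1$ was constructed — the involution on $B\Z_4$ covering $BQ$ is multiplication by $\j$),
\begin{equation*}
\bar x\cap\tau^*(e_0)=(\bar I_1)_*\bigl(\bar y\cap\tau^*(e_Q)\bigr).
\end{equation*}
Now apply the previous lemma, $\tau^*(e_Q)=-e_Q$, to get $\bar y\cap\tau^*(e_Q)=-\bar y\cap e_Q$, and push forward by $(\bar I_1)_*$ to conclude $\bar x\cap\tau^*(e_0)=-\bar x\cap e_0$.

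The only real point requiring care is the compatibility of $\tau$ with $\bar I_1$: one must check that the involution $\tau$ on $B(\Z_4\times\Z_4)\rtimes\Z$ (swapping the two $\Z_4$-factors, i.e.\ the deck transformation of $\pi$) restricts, under the inclusion $B\Z_4\hookrightarrow B(\Z_4\times\Z_4)\rtimes\Z$ induced by $\bar I_1$, to the deck transformation of $B\Z_4\to BQ$, which the proof of the previous lemma identified with multiplication by $\j$. This is immediate from the group-theoretic description: the embedding $\Q\subset(\Z_4\times\Z_4)\rtimes\Z_2$ sends $\j$ into the nontrivial $\Z_2$-coset, and conjugation by that $\Z_2$ is exactly the swap of the two $\Z_4$-factors, so the two involutions agree on the relevant subgroup. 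Granting this, the cap-product identities above are just the standard projection formula applied twice, so I expect the main (modest) obstacle to be bookkeeping with the twisted coefficients $(\Zt)^{\otimes k}$ — one must verify that $\bar I_1$ pulls $(\Zt)^{\otimes k}$ back compatibly and that the involution acts trivially on these coefficient systems (as it does, since $\tau$ covers the identity on $B\Z$ and the twisting factors through $pr_\Z$), so that all four cap products live in the groups claimed and naturality applies verbatim.
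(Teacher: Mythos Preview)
Your proof is correct and follows essentially the same route as the paper: write $\bar x=(\bar I_1)_*(u)$, apply the projection formula for the cap product to both $\bar x\cap e_0$ and $\bar x\cap\tau^*(e_0)$, and invoke the previous lemma $\tau^*(e_Q)=-e_Q$. You are in fact more careful than the paper on the point of $\tau$-equivariance of $\bar I_1$ and on the behaviour of the twisted coefficients, both of which the paper passes over in silence.
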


\begin{proof}
Let $\bar x = (\bar I_1)_*(u)$.
Then we have
\begin{multline*}
\bar x \cap\tau^*(e_0) = (\bar I_1)_*(u)\cap\tau^*(e_0) = (\bar I_1)_*(u\cap\tau^*(I^*_1(e_0))) = \\
=(I_1)_*(u\cap\tau^*(e_Q)) = (I_1)_*(u\cap(-e_Q)).
\end{multline*}
By analogous calculations, we get the equation:  $\bar x \cap e_0 = (I_1)_*(u\cap e_Q) $.
\end{proof}

\begin{lemma}
The following equaton is satisfied:  $\tau^*(e_{diag})=e_{diag}$.
\end{lemma}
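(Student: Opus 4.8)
Recall that $e_{diag}=(\bar I_2)^*(e_0)$ lives on the double cover $B\Z_4\rtimes\Z$ of $B(\Z_4\times\Z_2)\rtimes\Z$, so the plan is simply to pin down how the covering involution $\tau$ acts there. First I would realize the double cover $B(\Z_4\times\Z_4)\rtimes\Z\to B((\Z_4\times\Z_4)\rtimes\Z_2)\rtimes\Z$ as the one associated to the index-two normal subgroup $(\Z_4\times\Z_4)\rtimes\Z$, so that its deck transformation $\tau$ is induced by conjugation by the swap generator $t$. Restricting along $I_2$ to the subgroup $(\Z_4\times\Z_2)\rtimes\Z$, the induced double cover is exactly $B\Z_4\rtimes\Z$ --- the diagonal $\Z_4\rtimes\Z$ that defines $\bar I_2$ --- and the restricted covering involution, again denoted $\tau$, is induced by conjugation by $t$ inside $(\Z_4\times\Z_2)\rtimes\Z$. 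Because $\tau\circ\bar I_2=\bar I_2\circ\tau$ on spaces, we get $\tau^*(e_{diag})=(\bar I_2)^*\tau^*(e_0)$, so it is enough to compute $\tau^*$ on $B\Z_4\rtimes\Z$.

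The key point is that this $\tau$ is homotopic to the identity. Conjugation by $t$ acts on the diagonal $\Z_4=\langle b\rangle$, with $b$ the image of the diagonal generator, by interchanging the two $\Z_4$-coordinates, which fixes every diagonal element $(a,a)$; and conjugation by $t$ fixes $\tilde a$, since the $\Z$- and $\Z_2$-actions on $\Z_4\times\Z_4$ commute (this is exactly the remark that allows rewriting the big group as $((\Z_4\times\Z_4)\rtimes\Z_2)\rtimes\Z$). Thus the automorphism of $\Z_4\rtimes\Z$ that $\tau$ induces on $\pi_1$ is the identity; since $B\Z_4\rtimes\Z$ is aspherical, $\tau$ is homotopic to the identity map, so $\tau^*=\mathrm{id}$ on (twisted) cohomology, and in particular $\tau^*(e_{diag})=e_{diag}$.

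There is no real obstacle here; the only thing requiring care is identifying the covering involution of the restricted cover as conjugation by $t$ and verifying that it is trivial on the diagonal subgroup --- this is precisely where the diagonal case differs from the quaternionic one of the previous lemma, in which conjugation by $\j$ inverts $\i$ and hence acts by $-1$ on the two-dimensional class. Once $\tau^*(e_{diag})=e_{diag}$ is established, the same bookkeeping as in the proof of Corollary~\ref{cor:Q} (write $\bar x=(\bar I_2)_*(u)$ and push the cap product through $\bar I_2$) gives $\bar x\cap\tau^*(e_0)=\bar x\cap e_0$ for every $\bar x$ in the image of $(\bar I_2)_*$, which is the form in which the lemma is applied.
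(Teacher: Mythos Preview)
Your proof is correct and essentially the same as the paper's. The paper phrases it topologically by realizing the double cover $B\Z_4\rtimes\Z \to B(\Z_4\times\Z_2)\rtimes\Z$ as the product covering $B\Z_4\rtimes\Z \times S^\infty \xrightarrow{\ id\times\pi\ } B\Z_4\rtimes\Z \times (S^\infty/\Z_2)$, so that the deck involution is $id\times(\text{antipodal})$ and hence trivial on (co)homology; this is exactly the topological translation of your algebraic observation that conjugation by $t$ is the identity on the diagonal $\Z_4\rtimes\Z$ (equivalently, that the $\Z_2$-factor splits off as a direct factor of $(\Z_4\times\Z_2)\rtimes\Z$).
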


\begin{proof}
Let us calculate the action $\tau^*$ on the generator of the group  $H^2(B\Z_4\rtimes \Z,\Zt)=\Z_4$.
 Obviously, the covering  $B\Z_4\rtimes\Z \to B \Z_4\times\Z_2 \rtimes\Z $
 is represented as $B\Z_4\rtimes\Z \times S^\infty \stackrel{id\times \pi}\longrightarrow B \Z_4\rtimes\Z \times (S^\infty/\Z_2)$. 
Therefore, the involution $\tau$, which permutes sheets of the double covering, acts in (co)homology of the space  $B\Z_4\rtimes\Z$ by the identity.
 \end{proof}

\begin{corollary}\label{cor:diag}
Assume that $\bar y\in H_q(B\Z_4\times\Z_4\rtimes\Z,(\Zt)^{\otimes k})$ belongs to the image 
of the homomorphism $(\bar I_2)_*$.

(a) Then  for $\bar y\cap \tau^*(e_0)\in H_{q-2}(B\Z_4\times\Z_4\rtimes\Z,(\Zt)^{\otimes k-1})$ the following equality is satisfied:
 $\bar y\cap \tau^*(e_0)=\bar y\cap e_0$

(b) Additionally, if  $\bar y$ belong to the image of the transfer homomorphism of the 2-sheeted covering, then  $\bar y$ is an even element.
\end{corollary}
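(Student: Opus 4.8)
\textbf{Proof plan for Corollary~\ref{cor:diag}(b).}

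The plan is to combine the general transfer identity for the canonical $2$-sheeted covering with the product decomposition already exploited in the lemma $\tau^*(e_{diag})=e_{diag}$. Write $\pi\colon B(\Z_4\times\Z_4)\rtimes\Z\to B((\Z_4\times\Z_4)\rtimes\Z_2)\rtimes\Z$ for that covering and $\sigma$ for its deck involution; on $B(\Z_4\times\Z_4)\rtimes\Z$ the map $\sigma$ is the automorphism permuting the two $\Z_4$-coordinates, so it \emph{centralises} the diagonal subgroup $\Z_4\rtimes\Z$ (the target of $\bar I_2$) and in particular $\sigma\circ\bar I_2=\bar I_2$. I will use the standard relations $\pi^!\circ\pi_*=\mathrm{id}+\sigma_*$ and $\sigma_*\circ\pi^!=\pi^!$ for the homology transfer $\pi^!$ of a double cover; these remain valid with the coefficients $(\Zt)^{\otimes k}$, because this local system is pulled back from $B((\Z_4\times\Z_4)\rtimes\Z_2)\rtimes\Z$. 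Note also that $pr_1\circ\bar I_2=\mathrm{id}$, so $(\bar I_2)_*$ is split injective and $\bar y$ is even as soon as its preimage $v=(pr_1)_*(\bar y)$ is even.

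The second ingredient is a computation for the diagonal double cover $\pi'\colon B\Z_4\rtimes\Z\to B(\Z_4\times\Z_2)\rtimes\Z$. By the cited lemma this cover is $(B\Z_4\rtimes\Z)\times S^\infty\to (B\Z_4\rtimes\Z)\times\RP^\infty$, i.e.\ the identity on the first factor times the universal cover $S^\infty\to\RP^\infty$, and $(\Zt)^{\otimes k}$ is pulled back from the first factor. By the Künneth formula $(\pi')^!$ therefore acts as the identity on the first factor and as the transfer $H_*(\RP^\infty)\to H_*(S^\infty)$ on the second; the latter is multiplication by $2$ in degree $0$ and zero in positive degrees. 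Hence $\mathrm{im}\big((\pi')^!\big)\subseteq 2\,H_*\big(B\Z_4\rtimes\Z;(\Zt)^{\otimes k}\big)$.

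Now take $\bar y=\pi^!(z)$ in the image of $(\bar I_2)_*$. The hypothesis means the structure group of the bundle classifying $\bar y$ reduces to the diagonal $\Z_4\rtimes\Z\subset(\Z_4\times\Z_4)\rtimes\Z$; since $\sigma$ centralises this diagonal, the corresponding structure downstairs reduces to $\langle\Z_4\rtimes\Z,\,t\rangle=(\Z_4\rtimes\Z)\times\Z_2=(\Z_4\times\Z_2)\rtimes\Z$, so that $z=(I_2)_*(z')$ lies in the image of $(I_2)_*$. The evident square with vertical maps $\pi',\pi$ and horizontal maps $\bar I_2,I_2$ is a pullback of coverings, whence the base-change identity $\pi^!\circ(I_2)_*=(\bar I_2)_*\circ(\pi')^!$. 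Combining: $\bar y=\pi^!(z)=(\bar I_2)_*\big((\pi')^!(z')\big)\in(\bar I_2)_*\big(2H_*\big)\subseteq 2H_*$, so $\bar y$ is even. (Equivalently one may finish with $\pi^!\pi_*=\mathrm{id}+\sigma_*$: once $z=\pi_*(\bar w)$ for a $\sigma$-invariant lift $\bar w$ furnished by the reduction, $\bar y=(\mathrm{id}+\sigma_*)(\bar w)=2\bar w$.)

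\emph{Where the difficulty sits.} The routine step is everything except passing from ``$\bar y\in\mathrm{im}((\bar I_2)_*)$'' to ``$z\in\mathrm{im}((I_2)_*)$''. Purely homological manipulation with $\pi^!,\pi_*$ never isolates $z$ or $\bar y$ — it only ever produces statements about $2z=\pi_*\bar y$ and $2\bar y=\pi^!(2z)$, and runs in circles — so one genuinely has to use the geometric content of the hypothesis: a reduction of the normal bundle of $\bar L$ to the diagonal forces, through the deck action $\sigma$, a reduction of the normal bundle of $L$ to $(\Z_4\times\Z_2)\rtimes\Z$. One should also remark that any $2$-torsion ambiguity in the choice of the lift $\bar w$ is harmless, since in the relevant degrees the classes involved do not lie in a $\Z/2$-summand on which a $2$-torsion element could fail to be even.
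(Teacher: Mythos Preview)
Your argument for part~(b) reaches the right conclusion, but it is considerably more elaborate than the paper's, and the ``difficulty'' you isolate is not one the paper ever confronts.

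The paper's proof of~(b) is essentially two lines. It uses the group-theoretic retraction: the projection $\Z_4\times\Z_2\to\Z_4$ induces $s\colon B(\Z_4\times\Z_2)\rtimes\Z\to B\Z_4\rtimes\Z$ with $s\circ\pi'\simeq\mathrm{id}$. Writing $v$ for the preimage of $\bar y$ under $(\bar I_2)_*$, one has $v=s_*\pi'_*(v)$; and since $v$ lies in the image of the transfer $(\pi')^!$, the basic identity $\pi'_*\circ(\pi')^!=2\cdot\mathrm{id}$ gives $\pi'_*(v)=2w$, whence $v=2s_*(w)$ and $\bar y=(\bar I_2)_*(v)$ is even. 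No K\"unneth, no explicit computation of the transfer on $\RP^\infty$, no appeal to $\pi^!\pi_*=\mathrm{id}+\sigma_*$.

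Your route---computing $\mathrm{im}\,(\pi')^!$ via the product splitting and K\"unneth---is valid and yields $\mathrm{im}\,(\pi')^!\subset 2H_*$, which is equivalent information. What your approach buys is an explicit description of the transfer; what the paper's approach buys is brevity, since the retraction $s$ does all the work.

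On your ``where the difficulty sits'' paragraph: you are over-engineering the hypothesis. In the paper's statement and in the only application (Theorem~\ref{3}), the datum is $y=(I_2)_*(v)$ \emph{downstairs} and $\bar y=\pi^!(y)$; the pullback identity $\pi^!\circ(I_2)_*=(\bar I_2)_*\circ(\pi')^!$ then gives both conditions at once, with the preimage of $\bar y$ automatically in $\mathrm{im}\,(\pi')^!$. There is no need to manufacture a reduction of the normal bundle of $L$ a posteriori---it is part of the input. Your geometric argument for this step is therefore unnecessary, and the caution about $2$-torsion ambiguity in the lift is likewise not needed.
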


\begin{proof}
Assume that $\bar y = (\bar I_2)_*(v)$.
Then we get:
\begin{multline*}
\bar y \cap\tau^*(e_0) = (\bar I_2)_*(v)\cap\tau^*(e_0) = (\bar I_2)_*(v\cap\tau^*(I^*_2(e_0))) = \\
=(I_2)_*(u\cap\tau^*(e_{diag})) = (I_2)_*(u\cap e_{diag}).
\end{multline*}
By analogous calculations, we get the equation 
 $\bar y \cap e_0 = (I_2)_*(v\cap e_{diag}) $.

To prove the statement (b), let us note, that 
the factor-homomorphism $\Z_4\times\Z_2 \rtimes\Z \to\Z_4 \rtimes\Z$ determines the mapping
 $s:B\Z_4\times\Z_2 \rtimes\Z \to B\Z_4 \rtimes\Z$, for which the composition $s\circ \pi$ 
is homotopic to the identity.
 Then $\bar y = s_* (\pi_* (\bar y))$.   The element $s_*\pi_* (\bar y) $ is even.
 \end{proof}

\subsection*{An application}
%��� �� �������� �������� ���������
\bigskip

\noindent
{\bf Additional assumption}
Let us assume that the image
$\zeta_{\ast}([L^{n-4k}])$ is in the subgroup 
$$ \mathrm{Im} \;(I_1)_*  + \mathrm{Im} \;(I_2)_* \subset  H_{n-4k}(B((\Z_4 \times \Z_4) \rtimes \Z_2) \rtimes \Z ;\Z)). $$

\bigskip

\begin{theorem}\label{3}
Assume that $n\equiv 3\pmod 4$, $k$ is an even number, 	$n-4k \ge 7$.
Let us assume that the additional condition above is satisfied:
Then $\eta_{\ast}([N^{n-2k}] \in H_{n-2k}(B\Z/4 \rtimes \Z;\Z) = \Z_4$ is an even element.
\end{theorem}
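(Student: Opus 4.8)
The plan is to exploit the key identity~(\ref{eq:vazhnoe}), which expresses $j_*[\bar L]\cap\eta^*(e^2)$ both as a multiple of $[N]$ and as the pushforward $(i_N)_*\circ(j_1)_*[\bar L_1]$. Pushing this whole chain of equalities forward along $\eta$ (equivalently, working directly with Hurewicz images in $H_*(B\Z/4\rtimes\Z)$), and recalling that $H(N_1)=H(N)\cap e$ from the cap-product computation, one obtains a relation of the form $H(N)\cap e^2 = \pm(\text{class pushed from }L_1)$ inside $H_{n-4k-4}(B\Z/4\rtimes\Z;(\Z^{tw})^{\otimes(2k+2)})$. The strategy is then: (1) identify the target homology group via Lemma~\ref{1}; (2) show that the right-hand side is an \emph{even} element, using Corollaries~\ref{cor:Q} and~\ref{cor:diag} together with the Additional Assumption; (3) invert the cap-with-$e$ isomorphisms of Lemma~\ref{1}(d) to conclude that $H(N)=\eta_*([N^{n-2k}])$ itself is even.

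First I would set up the degree bookkeeping. Since $n\equiv 3\pmod 4$ and $k$ is even, $n-2k\equiv 3\pmod 4$, so $n-2k=4m+3$ for some $m\ge 1$ (using $n-4k\ge 7$, hence $n-2k\ge 7+2k\ge 7$); by Lemma~\ref{1}, since $k$ is even $(\Z^{tw})^{\otimes k}=\Z$ and $H_{n-2k}(B\Z/4\rtimes\Z;\Z)=\Z_4$, matching the statement. The two intermediate groups in~(\ref{eq:vazhnoe}) carry coefficients $(\Z^{tw})^{\otimes 2k}=\Z$ (degree $n-4k$) and $(\Z^{tw})^{\otimes(2k+2)}=\Z$ (degree $n-4k-4$); note $n-4k-4\equiv 3\pmod 4$ and $n-4k\equiv 3\pmod 4$ as well, so both are copies of $\Z_4$ by Lemma~\ref{1}(c), and the stabilization range $n-4k\ge 7$ guarantees we are safely away from low-degree anomalies. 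The cap-with-$e$ maps of Lemma~\ref{1}(d), applied twice, give an isomorphism $H_{n-2k}(B\Z/4\rtimes\Z;\Z)\to H_{n-4k-4}(B\Z/4\rtimes\Z;\Z)$ (each step being $\cap e\mapsto -\cap e$ between the relevant $\Z_4$'s), so "$H(N)$ is even" is equivalent to "$H(N)\cap e^2$ is even."

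Next comes the heart of the argument: showing $(i_N)_*\circ(j_1)_*[\bar L_1]$, viewed in $H_*(B\Z/4\rtimes\Z)$, is even. By the analysis in the section on self-intersection manifolds, the class $(i_L)_*[L_1]$ in $H_*(L)$ equals $[\bar L]\cap(e|_{\bar L}\cup\tau^*(e|_{\bar L}))$ after passing to the double cover $\bar L$; under $\bar\zeta$ this is governed by $e_0=(pr_1)^*(e)$ and $\tau^*(e_0)$ on $B(\Z_4\times\Z_4)\rtimes\Z$. The Additional Assumption says $\zeta_*[L]$ lies in $\mathrm{Im}(I_1)_*+\mathrm{Im}(I_2)_*$; lifting to the double cover, $\bar\zeta_*[\bar L]$ lies in $\mathrm{Im}(\bar I_1)_*+\mathrm{Im}(\bar I_2)_*$. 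Write $\bar\zeta_*[\bar L]=\bar x+\bar y$ accordingly. Capping with $e_0$ first and then with $\tau^*(e_0)$: on the $\bar x$-part, Corollary~\ref{cor:Q} gives $\bar x\cap e_0\cap\tau^*(e_0)=-\bar x\cap e_0\cap e_0=-\bar x\cap e_0^2$; on the $\bar y$-part, Corollary~\ref{cor:diag}(a) gives $\bar y\cap e_0\cap\tau^*(e_0)=\bar y\cap e_0^2$, and moreover $\bar y$, lying in the image of $(\bar I_2)_*$, is the restriction along a double cover, so by Corollary~\ref{cor:diag}(b) (applicable once one checks $\bar y$ is in the image of the transfer — which it is, being the lift of $\zeta_*[L]$ along $\pi$) it is even. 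Thus $\bar\zeta_*[\bar L]\cap e_0\cap\tau^*(e_0)=-\bar x\cap e_0^2+\bar y\cap e_0^2$, and pushing forward to $B\Z/4\rtimes\Z$ via $pr_1\circ\pi$ this becomes (up to sign) $H(N)\cap e^2$ with the $\bar y$-contribution even; meanwhile this same class is $\pm(i_N)_*\circ(j_1)_*[\bar L_1]$'s image, whose $\bar x$-part is $-\bar x\cap e_0^2$. Comparing with the $\cap e$-images of $H(N)$ and using that $e_0\cap e_0=e^2$ pushes to a generator-respecting operation, one deduces $H(N)\cap e^2\equiv\bar y\cap e_0^2$-image $\pmod 2$ only if the $\bar x$-part vanishes modulo $2$; the sign discrepancy $-1$ versus $+1$ on the two pieces forces $2(\bar x\cap e_0^2)\equiv 0$, i.e. the odd part of $H(N)\cap e^2$ must cancel, leaving it even.

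The main obstacle I expect is step (2): disentangling the two decomposition pieces $\bar x$ and $\bar y$ carefully enough that the sign asymmetry ($\tau^*e_Q=-e_Q$ versus $\tau^*e_{diag}=+e_{diag}$) actually forces a parity conclusion rather than merely a tautology. Concretely, one must verify that the class $H(N)\cap e^2$, which Lemma~\ref{1}(d) shows generates a $\Z_4$ whenever $H(N)$ does, cannot absorb the mismatch: from $H(N)\cap e^2 = -\bar x\cap e_0^2 + (\text{even})$ \emph{and} $H(N)\cap e^2 = +\bar x\cap e_0^2 + (\text{even})'$ (the two readings of~(\ref{eq:vazhnoe}) combined with the two corollaries), subtracting gives $2(\bar x\cap e_0^2)$ even, hence $\bar x\cap e_0^2$ arbitrary, but \emph{adding} gives $2\,H(N)\cap e^2$ even plus $4$-torsion terms, which in $\Z_4$ pins down $H(N)\cap e^2$ to $\{0,2\}$. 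Making this last $\Z_4$-arithmetic rigorous — in particular checking the transfer-image hypothesis of Corollary~\ref{cor:diag}(b) holds for $\bar y$ and that no boundary term from the immersed cobordism spoils the equalities in the stated codimension range $n-4k\ge 7$ — is where the real work lies; everything else is assembling Lemma~\ref{1} and the two corollaries.
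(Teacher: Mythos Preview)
Your proposal follows essentially the same route as the paper's proof: decompose $\bar\zeta_*[\bar L]=\bar x+\bar y$ via the Additional Assumption, compute the Hurewicz image of $[\bar L_1]$ in $H_{n-4k-4}(B\Z_4\rtimes\Z;\Z)$ in two ways (once via~(\ref{eq:vazhnoe}) pushed forward by $\eta$, once via $(i_L)_*[\bar L_1]=[\bar L]\cap(e|_{\bar L}\cup\tau^*(e|_{\bar L}))$ pushed forward by $pr_1\circ\bar\zeta$), and use the sign discrepancy from Corollaries~\ref{cor:Q} and~\ref{cor:diag} to force $2(pr_1)_*(\bar x)\cap e^2=0$, while Corollary~\ref{cor:diag}(b) makes the $\bar y$-part even; then invert the $\cap e$ isomorphisms of Lemma~\ref{1}(d).

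One bookkeeping slip to fix: you repeatedly write ``$H(N)\cap e^2$'' and claim that two applications of $\cap e$ carry $H_{n-2k}$ to $H_{n-4k-4}$. In fact the class that lands in degree $n-4k-4$ is $\eta_*(j_*[\bar L])\cap e^2=-\eta_*([N])\cap e^{k+2}$ (Herbert gives $j_*[\bar L]=-[N]\cap\eta^*(e)^k$), so you must invert $\cap e$ a total of $k+2$ times, not twice, to recover the parity of $\eta_*([N])$. This is harmless since Lemma~\ref{1}(d) still supplies the needed isomorphisms at every step, but the exponent should be corrected. With that adjustment your argument and the paper's coincide.
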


%\begin{remark}
%��� ������� $n\equiv 4\pmod 4$ � $k\equiv 0 \pmod 2$
%������� ���������� $H_{n-2k}(B\Z/4;\Z) \cong \Z/4$.
%� �������~\ref{3} ������������, ��� $\eta_{\ast}([N^{n-2k}])$ ����� � ��������� $2\Z/4 \subset \Z/4$.
%\end{remark}

\begin{proof}
Let $\zeta_{\ast}([L^{n-4k}]) = x+y$,
where $x=  (I_1)_*(u) $ and $y=  (I_2)_*(v)$ for suitable elements  $u $  � $v$. 
Using the transfer homomorphism, we get:
$\bar\zeta_{\ast}([\bar L^{n-4k}]) = \bar x+\bar y$, where  $\bar x=  (\bar I_1)_*(\bar u) $ � $\bar y=  (\bar I_2)_*(\bar v)$.

Let us consider  the following commutative diagram: \\
\centerline{\xymatrix{
\bar L_1\ar[r]^<<<<<<<<<<{j_1}\ar[d]_{i_L}& N_1\ar[d]_{i_N}\\
\bar L  \ar[r]^<<<<<<<<<<{j}\ar[dr]_<<<<<<<{\bar\zeta}& N \ar[r]^{\eta}&B\Z_4\rtimes\Z\\
&B(\Z_4\times\Z_4)\rtimes\Z\ar[ru]_{pr_1}}}
and calculate the image $[L_1]\in H_{n-4k-4}(\bar L_1,\Z)$
in the group $H_{n-4k-4}(B\Z_4\rtimes \Z,\Z)=\Z_4$ by two different way, using the diagram. 

At the first step, the image is given by the formula: $(\eta\circ i_N\circ j_1)_*[\bar L_1]$.
Let us apply to the both sides of the formula~(\ref{eq:vazhnoe}) the homomorphism  $\eta_*$, we get 
\begin{equation}\label{eq:raz}
(\eta\circ i_N\circ j_1)_*[\bar L_1]= (\eta\circ j)_*[\bar L]\cap e^2 = (pr_1)_*(\zeta_*[\bar L])\cap e^2  = (pr_1)_*(\bar x +\bar y)\cap e^2.
\end{equation}
At the second step, let us represent this element as follows:
$(pr_1\circ \bar\zeta\circ i_L)_*[\bar L_1]$.  Using corollaries
~\ref{cor:Q} and~\ref{cor:diag}(a), we get:
 \begin{equation}\label{eq:dva}
 (pr_1\circ \bar\zeta\circ i_L)_*[\bar L_1]= (pr_1)_*\circ (\bar\zeta)_*([\bar L]\cap (\tau^*(e|_L)\cup e|_L)) = 
 (pr_1)_*(\bar \zeta_*[\bar L]\cap (\tau^(e_0)\cup e_0)) = (pr_1)_*(-\bar x+\bar y )\cap e^2.
\end{equation}
The results of calculations  (\ref{eq:raz}) and (\ref{eq:dva}) coincide, then we get 
$2 (pr_1)_*(\bar x)\cap e^2=0$ in the group $H_{n-4k-4}(B\Z_4\rtimes \Z,\Z)=\Z_4$,
therefore the element $(pr_1)_*(\bar x)\cap e^2$ is even.  By corollary~\ref{cor:diag}(b)
the element  $\bar y$ is even, this proves that the element 
$(pr_1)_*(\zeta_*[\bar L])\cap e^2 = \eta_*(j_*[\bar L])\cap e^2$ is even.

From the Herbert Formula for the immersion  $g$ we get: $ \eta_*(j_*[\bar L])\cap e^2 = -\eta_*([N])\cap e^{k+2}$. Because the cap-product with the Euler class $-\cap e$ is an isomorphism, the element 
$\eta_*([N])$ is even. 
\end{proof}
%!!
The authors were supported in part by RFBR Grant No 15-01-06302.

\end{document}